\newtheorem{theorem}{\sc Theorem}[section]
\newtheorem{thm}[theorem]{\sc Theorem}
\newtheorem{lem}[theorem]{\sc Lemma}
\newtheorem{cor}[theorem]{\sc Corollary}
\newtheorem{rem}[theorem]{\sc Remark}
\newtheorem*{con1'}{Conjecture 1'}
\title[Weak Commutativity]{On the exponent of the Weak commutativity group $\chi(G)$}
\author[Bastos]{R. Bastos}
\address{ Departamento de Matem\'atica, Universidade de Bras\'ilia,
Brasilia-DF, 70910-900 Brazil }
\email{(Bastos) bastos@mat.unb.br; (de Melo) emerson@mat.unb.br}
\author[de Melo]{E. de Melo}
\author[de Oliveira]{R. de Oliveira}
\address{ Instituto de Matem\'atica e Estat\'istica, Universidade Federal de Goi\'as,
Goi\^ania-GO, 74690-900 Brazil }
\email{(de Oliveira) ricardo@ufg.br}
\subjclass[2010]{20D10, 20D15, 20E06, 20E34, 20J99}
\keywords{Solvable groups; $p$-groups; weak commutativity}
\begin{document}
\maketitle
\begin{abstract}
The weak commutativity group $\chi(G)$ is generated by two isomorphic groups $G$ and $G^{\varphi }$ subject to the relations $[g,g^{\varphi}]=1$ for all $g \in G$. The group $\chi(G)$ is an extension of $D(G) = [G,G^{\varphi}]$ by $G \times G$. We prove that if $G$ is a finite solvable group of derived length $d$, then $\exp(D(G))$ divides $\exp(G)^{d}$ if $|G|$ is odd and $\exp(D(G))$ divides $2^{d-1}\cdot \exp(G)^{d}$ if $|G|$ is even. Further, if $p$ is a prime and $G$ is a $p$-group of class $p-1$, then $\exp(D(G))$ divides $\exp(G)$. Moreover, if $G$ is a finite $p$-group of class $c\geq 2$, then $\exp(D(G))$ divides $\exp(G)^{\lceil \log_{p-1}(c+1)\rceil}$ ($p\geq 3$) and $\exp(D(G))$ divides $2^{\lfloor \log_2(c)\rfloor} \cdot \exp(G)^{\lfloor \log_2(c)\rfloor+1}$ ($p=2$).
\end{abstract}

\maketitle

\section{Introduction}

Let  $G^{\varphi}$ be
a copy of the group $G$, isomorphic via $\varphi : G \rightarrow
G^{\varphi}$, given by $g \mapsto g^{\varphi}$. The following group construction was introduced
and analyzed in \cite{Sidki} $$ \chi(G) = \langle G \cup G^{\varphi} \mid [g,g^{\varphi}]=1, \forall g \in G \rangle.$$ The weak commutativity group $\chi (G)$ maps onto $G$ by $g\mapsto g$, $g^{\varphi }\mapsto g$
with kernel $L(G)=\left\langle g^{-1}g^{\varphi } \mid g\;\in G\right\rangle $ and
maps onto $G\times G$ by $g\mapsto \left( g,1\right) ,g^{\varphi
}\mapsto \left( 1,g\right) $ with kernel $D(G)= [G,G^{\varphi}] = \langle [g,h^{\varphi}] \mid g,h \in G \rangle$. It is an
important fact that $L(G)$ and $D(G)$ commute. Define $T(G)$ to be the
subgroup of $G\times G\times G$ generated by $\{(g,g,1),(1,g,g)\mid g\in G\}$. Then $\chi (G)$ maps onto $T(G)$ by $g\mapsto \left( g,g,1\right)$, $g^{\varphi }\mapsto \left( 1,g,g\right) $, with kernel $W(G)=L(G)\cap D(G) \leq Z(L(G)D(G))$ and $\chi(G)/W(G)$ is isomorphic to a subgroup of $G \times G \times G$.  Another normal subgroup of $\chi (G)$ is $R(G)={%
[G,L(G),G^{\varphi }]} \leq W(G)$. By \cite[Lemma 2.2]{Roc82} the following sequence $$ 1 \to  R(G) \to W(G) \to M(G) \to 1$$ is exact, where $M(G)$ is the Schur multiplier of $G$ (cf. \cite{Miller}). For a fuller treatment we refer the reader to \cite{Roc82,Sidki}. See also \cite{GRS,LO}. 

In \cite{Roc82}, N.\,R. Rocco describes some bounds on the order and nilpotency class of $\chi(G)$, when $G$ is a finite $p$-group, $p$ odd. Recently, in \cite{BdMGN}, the authors prove that if $p$ is odd and $G$ is a powerful $p$-group, then $D(G)$ and all $\gamma_k(\chi(G))$ are powerfully embedded in $\chi(G)$. Moreover, if $G$ is a powerful $3$-group, then $\exp(\chi(G))$ divides $3\cdot \exp(G)$; and if $p\geq 5$, then $\exp(\chi(G))=\exp(G)$. 

In \cite{Moravec}, Moravec showed that if $G$ is a finite solvable group, then $\exp(M(G))$ divides $\exp(G)
^{2d-1}$. Later, in \cite{Sambonet}, Sambonet improves the previous bound proving that if $G$ is a finite solvable $p$-group, then $\exp(M(G))$ divides $2^{d-1}\cdot \exp(G)^{d}$ if $p=2$ and $\exp(M(G))$ divides $\exp(G)^{d}$ if $p>2$. Now, we extends this bound for the exponent $\exp(D(G))$ and  $\exp(\chi(G))$.        

\begin{thm} \label{thm:solvable}
Let $G$ be a finite solvable group of derived length $d$. 
\begin{itemize}
    \item[i)]  If $|G|$ is odd, then $\exp(D(G))$ divides  $\exp(G)^{d}$ and $\exp(\chi(G))$ divides  $\exp(G)^{d+1}$.
    \item[ii)] If $|G|$ is even, then $\exp(D(G))$ divides  $2^{d-1} \cdot \exp(G)^{d}$ and $\exp(\chi(G))$ divides  $2^{d-1} \cdot \exp(G)^{d+1}$. 
\end{itemize}
\end{thm}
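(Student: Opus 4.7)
The plan is to induct on the derived length $d$, in close parallel to Sambonet's treatment of the Schur multiplier in \cite{Sambonet}. Since $\chi(G)/D(G)\cong G\times G$ gives $\exp(\chi(G))\mid \exp(G)\cdot\exp(D(G))$, it suffices to establish the claimed bounds for $\exp(D(G))$; the corresponding bounds for $\exp(\chi(G))$ follow by multiplying by $\exp(G)$.

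\emph{Base case $d=1$.} Here $G$ is abelian. A direct check using the presentation of $\chi(G)$ (or the fact that $\chi(G)$ has small nilpotency class when $G$ is abelian, so $D(G)$ is central in $\chi(G)$) shows that $R(G)=[G,L(G),G^\varphi]$ is trivial. The exact sequence in the excerpt then collapses to $D(G)=W(G)\cong M(G)$, and Schur's classical bound $\exp(M(G))\mid\exp(G)$ for abelian groups settles both parities.

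\emph{Inductive step.} Let $N:=G^{(d-1)}$, which is abelian, and $\bar G:=G/N$ of derived length $d-1$. The projection $G\twoheadrightarrow\bar G$ extends to an epimorphism $\pi:\chi(G)\twoheadrightarrow\chi(\bar G)$ sending $D(G)$ onto $D(\bar G)$. Setting $K:=D(G)\cap\ker\pi$, one obtains
$$\exp(D(G))\text{ divides }\exp(K)\cdot\exp(D(\bar G)),$$
and the inductive hypothesis yields $\exp(D(\bar G))\mid\exp(G)^{d-1}$ in (i), or $\exp(D(\bar G))\mid 2^{d-2}\exp(G)^{d-1}$ in (ii). The theorem thereby reduces to the bound $\exp(K)\mid\exp(G)$ in case (i) and $\exp(K)\mid 2\exp(G)$ in case (ii).

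\emph{Bounding $K$.} The subgroup $K$ is the normal closure in $\chi(G)$ of commutators $[g,h^\varphi]$ with $g\in N$ or $h\in N$. The ingredients I would combine are the abelianness of $N$ and $N^\varphi$, the defining relations $[a,a^\varphi]=1$, the fact that $L(G)$ commutes elementwise with $D(G)$ and $W(G)\le Z(L(G)D(G))$, and Schur's bound applied to the abelian group $N$. Routine commutator calculus organized around these should present $K$ as an extension of an abelian section of exponent dividing $\exp(N)\mid\exp(G)$, the extra factor of $2$ in (ii) being the standard $2$-anomaly for the Schur multiplier of an abelian $2$-group, which propagates through the induction into the factor $2^{d-1}$.

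\emph{Principal obstacle.} The decisive step is the bound on $\exp(K)$. The difficulty beyond Sambonet's argument for $M(G)$ is that $D(G)$ is strictly larger than $W(G)$: one must separately control the image of $K$ in $D(G)/W(G)$, which sits inside the copy of $G'$ arising from the embedding $\chi(G)/W(G)\hookrightarrow G\times G\times G$ via $[g,h^\varphi]\mapsto(1,[g,h],1)$. Balancing the contribution from the $W(G)$-part of $K$ (controlled by Schur applied to $N$) against that from $D(G)/W(G)$ (controlled by $\exp(G')$) is the technical heart of the proof.
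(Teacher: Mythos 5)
Your global strategy coincides with the paper's: reduce to $\exp(D(G))$ via $\chi(G)/D(G)\cong G\times G$, induct on the derived length, pass to $\overline{G}=G/G^{(d-1)}$, and use the exact sequence $1\to[G^{(d-1)},G^{\varphi}]\to D(G)\to D(\overline{G})\to 1$ of Lemma \ref{lem.Sidki}. But the step you yourself single out as the ``technical heart'' --- bounding $\exp(K)$ for $K=[N,G^{\varphi}]$ with $N=G^{(d-1)}$ abelian --- is precisely what you have not supplied, and the route you sketch for it is not the one that works. You propose to split $K$ along $W(G)$ and to control the $W(G)$-part by ``Schur applied to $N$''; however $K\cap W(G)$ has no evident relation to $M(N)$ (the exact sequence $1\to R(G)\to W(G)\to M(G)\to 1$ involves $M(G)$, not $M(N)$, and only after killing $R(G)$), so this plan does not close. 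The paper instead proves the needed bound directly as Lemma \ref{lem.kernel}: by Lemma \ref{basic.chi}(c) any commutator $[[a_1,g_1^{\varphi}],[a_2,g_2^{\varphi}]]$ acts trivially on $D(G)$ because $[a_1,g_1]$ and $[a_2,g_2]$ lie in the abelian normal subgroup $A$ and hence commute in $G$, so $[A,G^{\varphi}]$ is nilpotent of class at most $2$; then the identity $[a^i,g^{\varphi}]=[a,g^{\varphi}]^i[a,g^{\varphi},a]^{i(i-1)/2}$ (Lemma \ref{arbG}) together with $[a,g^{\varphi},a^i]=[a,g^{\varphi},a]^i$ shows each generator $[a,g^{\varphi}]$ has order dividing $|a|$ in the odd case and $2|a|$ in the even case, and the class-$2$ structure converts generator orders into an exponent bound. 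Without an argument at this level of concreteness your induction does not go through; note also that your accounting of the factor $2$ (one ``Schur $2$-anomaly'' per layer) does not obviously produce $2^{d-1}$ rather than $2^{d}$.

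A second gap is the base case. You assert that $R(G)=1$ for $G$ abelian, so that $D(G)=W(G)\cong M(G)$ and Schur's bound for abelian groups applies. The identification $D(G)=W(G)$ is indeed correct for abelian $G$ (the image of $D(G)$ in $T(G)$ is generated by elements $(1,[g,h],1)=1$), but the vanishing of $R(G)=[G,L(G),G^{\varphi}]$ is not a formal consequence of the defining relations and is nowhere justified in your argument; a priori $D(G)$ is only an extension of $M(G)$ by $R(G)$, and one would still have to bound $\exp(R(G))$ --- exactly the quantity the paper warns in Remark \ref{rem.R(G)} is hard to control. The paper's base case is, again, just Lemma \ref{lem.kernel} applied with $A=G$.
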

 
In \cite{Ellis}, Ellis proved that if $G$ is a $p$-group of class $c\geq 2$, then $\exp(M(G))$ divides $\exp(G)^{c-1}$. In \cite{Moravec}, Moravec showed that if $G$ is a $p$-group of class $c\geq 2$, then $\exp(M(G))$ divides $\exp(G)^{2\cdot \lfloor \log_2(c)\rfloor}$. Later, in \cite{Sambonet17}, Sambonet proved that if $G$ is a $p$-group of class $c\geq 2$, then $\exp(M(G))$ divides $\exp(G)^{\lfloor \log_{p-1}(c)\rfloor+1}$ if $p>2$ and $\exp(M(G))$ divides $2^{\lfloor \log_2(c)\rfloor} \cdot \exp(G)^{\lfloor \log_2(c)\rfloor+1}$. We obtain the following related results.

\begin{thm}\label{nilG}
Let $G$ be a $p$-group of class at most $p-1$. Then $D(G)$ has class at most $p-1$, $\exp(D(G))$ divides $\exp(G)$ and $\exp(\chi(G))$ divides $\exp(G)^2$.   
\end{thm}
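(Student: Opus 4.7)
Write $c=\cl{G}\leq p-1$, $\Gamma=\chi(G)$, and $D=D(G)$. Note first that once the bound $\exp(D)\mid\exp(G)$ is established, the claim $\exp(\Gamma)\mid\exp(G)^{2}$ is immediate from the exact sequence $1\to D\to\Gamma\to G\times G\to 1$, since $\exp(\Gamma)\mid\exp(D)\cdot\exp(G\times G)=\exp(D)\cdot\exp(G)$. The two substantive claims are therefore (i) $\cl{D}\leq p-1$ and (ii) $\exp(D)\mid\exp(G)$.

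For (i) I would begin by controlling the nilpotency class of $\Gamma$ itself. From the surjection $\Gamma\twoheadrightarrow T(G)\leq G\times G\times G$ described in the introduction, whose kernel $W(G)$ lies in $Z(L(G)D(G))$, one obtains $\gamma_{c+1}(\Gamma)\leq W(G)\leq C_{\Gamma}(D)$. Combined with the inclusion $D\leq\gamma_{2}(\Gamma)$ (whence $\gamma_{k}(D)\leq\gamma_{2k}(\Gamma)$) and repeated use of the three-subgroups lemma applied to the defining identity $[g,g^\varphi]=1$, in the spirit of the commutator calculations of \cite{Roc82}, the class of $D$ should collapse to at most $c\leq p-1$.

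For (ii) I would invoke the Lazard correspondence. Since $G$ has class at most $p-1$, form the associated Lie ring $L=\log G$, which has the same order, nilpotency class, and (additive) exponent as $G$. The weak-commutativity construction has a natural Lie-theoretic analogue $\chi_{\mathrm{Lie}}(L)=\langle L, L^\varphi\mid[x,x^\varphi]=0\rangle$, which (granted the class bound from step (i)) is nilpotent of class at most $p-1$, so Lazard transports $\chi_{\mathrm{Lie}}(L)$ to $\Gamma$ and the Lie ideal $[L,L^\varphi]$ to $D$. Expanding $[x+y,(x+y)^\varphi]=0$ bilinearly gives the crucial skew relation
\[
[x,y^\varphi]+[y,x^\varphi]=0,
\]
and hence
\[
\exp(L)\cdot[x,y^\varphi]=[\exp(L)\,x,\,y^\varphi]=0
\]
for all $x,y\in L$. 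Thus the additive exponent of $[L,L^\varphi]$ divides $\exp(L)=\exp(G)$, and transferring back through Lazard yields $\exp(D)\mid\exp(G)$.

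\textbf{Main obstacle.} The heart of the proof is the class bound: one must verify that the class of $\Gamma$ (and hence of $D$) is tight enough both to apply the Lazard correspondence and to match the final class statement of the theorem. This rests on a careful commutator calculus in $\Gamma$ exploiting the defining identity $[g,g^\varphi]=1$, the three-subgroups lemma, and the structure of the normal subgroups $W(G)\leq L(G)D(G)$. Once that bound is secured, the exponent statement reduces to the bilinear observation above, and the $\exp(\chi(G))\mid\exp(G)^{2}$ bound is automatic from the extension.
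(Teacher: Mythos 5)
Your reduction of $\exp(\chi(G))\mid\exp(G)^2$ to $\exp(D(G))\mid\exp(G)$ is exactly what the paper does, and your class bound, though stated vaguely (``should collapse''), is essentially recoverable: from $\gamma_{p-1}(D(G))\le\gamma_{2(p-1)}(\chi(G))\le\gamma_{c+1}(\chi(G))\le W(G)\le C_{\chi(G)}(D(G))$ one gets $\gamma_p(D(G))=1$. (The paper gets the same conclusion even more directly from Lemma~\ref{basic.chi}(c): any $\omega\in\gamma_{p-1}(D(G))$ acts on each generator $[g,h^{\varphi}]$ exactly as the corresponding word in $\gamma_{p-1}(G')=1$ does, so $\gamma_{p-1}(D(G))\le Z(D(G))$.) Note, however, that your stronger assertion that the class of $D(G)$ collapses to $c$ is neither proved nor needed.

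The genuine gap is the Lazard step. You assert that the Lazard correspondence transports a Lie-theoretic analogue $\chi_{\mathrm{Lie}}(\log G)$ to $\chi(G)$ and the ideal $[L,L^{\varphi}]$ to $D(G)$. This is a substantial unproven claim, not a routine application of Lazard. First, $\chi(G)$ is defined by a presentation, and there is no general principle that $\log$ of a presented group is the Lie ring with the ``corresponding'' presentation: the Baker--Campbell--Hausdorff formula converts the relation $[g,g^{\varphi}]=1$ into a mixture of Lie brackets of all weights, and the universal objects on the two sides need not match. Second, Lazard requires class $<p$, but $\chi(G)$ can have class $c+1=p$ (cf.\ Lemma~\ref{chi_sub_H}), so you cannot apply the correspondence to the ambient group; and if you apply it only to $D(G)$ (which does have class $\le p-1$), the resulting Lie ring is $\log D(G)$, in which the group generators $[x,g^{\varphi}]$ are \emph{not} Lie brackets, so the bilinearity computation $\exp(L)\cdot[x,y^{\varphi}]=[\exp(L)x,y^{\varphi}]=0$ has nothing to act on. The paper's actual route is elementary and avoids all of this: since $D(G)$ has class $\le p-1$ it is a \emph{regular} $p$-group, hence $\exp(D(G))$ is the maximal order of a generator $[x,g^{\varphi}]$; Hall's collection formula (Theorem~\ref{thm.Hall}) applied in $L=\langle x,[x,g^{\varphi}]\rangle$, whose class is controlled via Lemma~\ref{chi_sub_H}, shows that $x^{p^e}=1$ forces $[x,g^{\varphi}]^{p^e}\in[G',G^{\varphi}]^{p^e}$, and iterating down the lower central series of $G$ kills this power. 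You would need either to supply a full proof that $\log$ intertwines the $\chi$-construction in the relevant class range, or to replace the Lazard step by a regularity/collection argument of this kind.
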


\begin{thm}\label{thm:nilpotent}
Let $G$ be a $p$-group of class $c$. 
\begin{itemize}
    \item[i)]  If $p$ is odd, then $\exp(D(G))$ divides $(\exp{(G)})^{n}$ and $\exp(\chi(G))$ divides $(\exp{(G)})^{n+1}$, where $n=\lceil \log_{p-1}(c+1)\rceil$.
    \item[ii)] If $p=2$, then $\exp(D(G))$ divides $2^{m} \cdot \exp(G)^{m+1}$ and $\exp(\chi(G))$ divides $2^{m} \cdot \exp(G)^{m+2}$, where $m = \lfloor \log_2(c)\rfloor$. 
\end{itemize}
\end{thm}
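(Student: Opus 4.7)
The plan is to reduce to Theorem~\ref{nilG} by filtering $G$ into sections of class at most $p-1$. For $p$ odd I would define $N_i:=\gamma_{(p-1)^i}(G)$, giving $G=N_0\supseteq N_1\supseteq\cdots\supseteq N_n=1$ with $n=\lceil\log_{p-1}(c+1)\rceil$; from the standard inclusion $\gamma_k(\gamma_m(G))\leq\gamma_{km}(G)$ one checks that each section $N_i/N_{i+1}$ has nilpotency class at most $p-1$. For $p=2$ the analogous filtration $N_i:=\gamma_{2^i}(G)$ yields abelian sections and $m+1$ terms with $m=\lfloor\log_2 c\rfloor$.

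For any normal subgroup $N\trianglelefteq G$, the natural map $\chi(G)\twoheadrightarrow\chi(G/N)$ induces a surjection $D(G)\twoheadrightarrow D(G/N)$. Let $K_N$ denote its kernel, generated by the mixed commutators $[n,g^\varphi]$ and $[g,n^\varphi]$ with $n\in N$, $g\in G$; in particular $D(N)=[N,N^\varphi]\subseteq K_N$. The technical core will be a step lemma: if $N\trianglelefteq G$ has class at most $p-1$, then $\exp(K_N)$ divides $\exp(G)$ for odd $p$, or $2\exp(G)$ for $p=2$. Granting this, I would induct on $j$ to show that $\exp(D(G/N_j))$ divides $\exp(G)^j$ (odd $p$), respectively $2^{j-1}\exp(G)^j$ ($p=2$): the base $j=1$ is Theorem~\ref{nilG} applied to $G/N_1$, and the step from $j$ to $j+1$ is the lemma applied to $N_j/N_{j+1}\trianglelefteq G/N_{j+1}$. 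Setting $j=n$ bounds $\exp(D(G))$, and the extra factor of $\exp(G)$ for $\chi(G)$ comes from $\chi(G)/D(G)\cong G\times G$.

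The hard part will be the step lemma. The subgroup $D(N)=[N,N^\varphi]$ is handled directly by Theorem~\ref{nilG}, but the truly mixed commutators $[n,g^\varphi]$ involving $g\in G$ of potentially much higher class than $N$ are not covered by any small-group instance of $\chi$. My approach would be a Hall--Petrescu-style collection inside $\chi(G)$: expand $[n,g^\varphi]^{\exp(G)}$ modulo deeper terms of $K_N$, using normality of $N$ in $G$ together with the class-$(p-1)$ bound on $N$ to force each correction either into $D(N)$ or into a higher-weight commutator that vanishes by descending induction on weight in $K_N$. The factor of $2$ in the $p=2$ case would arise at each invocation of the $p=2$ branch of Theorem~\ref{nilG} and accumulate to $2^m$, exactly parallel to Sambonet's treatment of the Schur multiplier.
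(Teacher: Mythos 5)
Your overall architecture matches the paper's: filter $G$ by terms of the lower central series so that successive sections have class less than $p$, use Lemma \ref{lem.Sidki}~(b) to present $D(G)$ as an iterated extension, and bound the exponent of each kernel $[N,G^{\varphi}]$, with the extra factor $\exp(G)$ for $\chi(G)$ coming from $\chi(G)/D(G)\cong G\times G$. (The paper runs a single induction step with $A=\gamma_{\lceil (c+1)/(p-1)\rceil}(G)$ instead of writing out the filtration $\gamma_{(p-1)^i}(G)$, and it dispatches $p=2$ by quoting Theorem \ref{thm:solvable} together with $d\le \lfloor\log_2 c\rfloor+1$; these are cosmetic differences.)

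However, essentially all of the difficulty is concentrated in your ``step lemma,'' and your sketch of it has a genuine gap. First, the stated hypothesis is too weak: if $N$ merely has class $p-1$, then for $x\in N$, $g\in G$ the subgroup $\langle x,[x,g]\rangle\le N$ can have class $p-1$, so $L=\langle x,[x,g^{\varphi}]\rangle$ is only guaranteed class $p$, and Hall's formula (Theorem \ref{thm.Hall}) leaves a surviving correction term $\gamma_p(L)^{p^{e-1}}$ not killed by $\exp(N)=p^{e}$. Your filtration actually yields sections of class at most $p-2$ (since $\gamma_{p-1}(\gamma_{(p-1)^i}(G))\le\gamma_{(p-1)^{i+1}}(G)$), which is the hypothesis you need. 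Second, and more seriously, the plan to push corrections into ``higher-weight commutators that vanish by descending induction on weight in $K_N$'' does not work as described: higher-weight commutators in $[N,G^{\varphi}]$ involve entries from $G^{\varphi}$ of unbounded class, so weight in $K_N$ alone forces nothing to vanish. The missing ingredient is the paper's Lemma \ref{xy}: for $x\in N$, $g\in G$, the class of $\langle x,[x,g^{\varphi}]\rangle\le\chi(G)$ exceeds by at most one the class of $\langle x,[x,g]\rangle$, which lies in $N$ by normality and hence is small. That is what gives $\gamma_p(L)=1$ and places $[x,g^{\varphi}]^{p^e}$ in $\gamma_2(L)^{p^e}\le [N,N^{\varphi}]^{p^e}$, where Theorem \ref{nilG} applied to $N$ finishes (this is the paper's Lemma \ref{lemP1}). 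Without this class-control lemma or a substitute, your step lemma is unproved. A minor bookkeeping slip besides: for $p=2$ the factors of $2$ come from Lemma \ref{lem.kernel}~(b) applied to the abelian kernels, not from Theorem \ref{nilG}, whose abelian case carries no factor of $2$; with that correction your count $2^{m}\exp(G)^{m+1}$ does come out right.
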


We will denote by $\tau(G)$ the quotient $\chi(G)/R(G)$. In particular, the subgroup $D(G)_{\tau(G)} = [G,G^{\varphi}]_{\tau(G)}$ is isomorphic to the non-abelian exterior square $G \wedge G$ and $W(G)_{\tau(G)}$ is isomorphic to the Schur multiplier $M(G)$ (see \cite{Roc82,NR2} for more details). As the non-abelian exterior square $G \wedge G$ is a section of $D(G)$, it seems natural to obtain bounds on the exponent $\exp(D(G))$ using known bounds for $\exp(G \wedge G)$ and $\exp(R(G))$. Unfortunately, this approach does not seem easy to implement in practice, see Remark \ref{rem.R(G)} for more details. 

\section{Preliminary Results}

The following theorem is known as P. Hall's collection formula (see \cite[Theorem 2.6]{G} for more details).

\begin{thm} \label{thm.Hall} 
Let $G$ be a $p$-group and $x, y$ elements of $G$. Then for any $k\geq 0$ we have  
\[
(xy)^{p^k}\equiv x^{p^k}y^{p^k} \pmod{\gamma_{2}(L)^{p^k}\gamma_{p}(L)^{p^{k-1}}\gamma_{p^2}(L)^{p^{k-2}}\gamma_{p^3}(L)^{p^{k-3}}\cdots \gamma_{p^k}(L)},
\]
where $L=\langle x,y\rangle$. We also have 
\[
[x,y]^{p^k}\equiv [x^{p^k}, y] \pmod {\gamma_{2}(L)^{p^k}\gamma_{p}(L)^{p^{k-1}}\gamma_{p^2}(L)^{p^{k-2}}\ldots \gamma_{p^k}(L)},
\]
where $L=\langle x,[x,y]\rangle$.
\end{thm}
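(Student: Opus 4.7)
The two congruences are classical consequences of the collection process, and I would derive them from the Hall--Petresco formula. First I would establish (or invoke) the universal identity
\[
(xy)^n = x^n y^n \, \tau_2(x,y)^{\binom{n}{2}} \tau_3(x,y)^{\binom{n}{3}} \cdots \tau_n(x,y)^{\binom{n}{n}},
\]
where the elements $\tau_i(x,y)$ depend only on $x,y$ and $i$, and lie in $\gamma_i(L)$ for $L = \langle x,y\rangle$. This is proved by induction on $n$: assuming the identity for $n$, one multiplies both sides by $xy$ on the right and applies the collection process together with the standard commutator identities, carefully tracking the weights of the correction terms that arise.

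Specialising to $n = p^k$, I would then analyse the $p$-adic valuations of the binomial coefficients. By Kummer's theorem, or equivalently by writing $\binom{p^k}{i} = \frac{p^k}{i}\binom{p^k-1}{i-1}$ and noting that $\binom{p^k-1}{i-1}$ is coprime to $p$ (Lucas), one obtains
\[
v_p\!\left(\binom{p^k}{i}\right) = k - v_p(i) \qquad (1 \leq i \leq p^k).
\]
Set $J(i) = \lfloor \log_p i\rfloor$, so that $v_p(i)\leq J(i)$ and $\gamma_i(L)\subseteq \gamma_{p^{J(i)}}(L)$. Then $\tau_i(x,y)^{\binom{p^k}{i}} \in \gamma_{p^{J(i)}}(L)^{p^{k-J(i)}}$. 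For $2 \leq i \leq p-1$ (so $J(i)=0$) this sits inside $\gamma_2(L)^{p^k}$; for $J(i) \geq 1$ it is precisely one of the factors $\gamma_{p^{J(i)}}(L)^{p^{k-J(i)}}$ of the displayed modulus; and for $i = p^k$ the term $\tau_{p^k}$ lies in $\gamma_{p^k}(L)$ itself. Combining these memberships with Hall--Petresco yields the first congruence.

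For the commutator formula, write $z = [x,y]$, so that $y^{-1}xy = xz$ and hence $[x^{p^k},y] = x^{-p^k}(xz)^{p^k}$. Applying the already-established congruence to the pair $(x, z)$, with $L = \langle x, z\rangle = \langle x, [x,y]\rangle$, gives $(xz)^{p^k} \equiv x^{p^k} z^{p^k} \pmod{N_k}$, where $N_k$ denotes the stated modulus. Every factor of $N_k$ is normal in $L$, so conjugation by $x^{-p^k}$ preserves $N_k$, yielding $[x^{p^k},y]\equiv z^{p^k}=[x,y]^{p^k}\pmod{N_k}$. The main obstacle in the whole argument is the weight bookkeeping in the inductive proof of Hall--Petresco; once that identity is in hand, the arithmetic with binomial valuations is entirely routine.
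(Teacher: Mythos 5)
Your derivation is correct: the paper offers no proof of this statement, quoting it as P.~Hall's collection formula with a citation to \cite[Theorem 2.6]{G}, and that reference establishes it exactly as you do --- via the Hall--Petresco identity together with the valuation $v_p\bigl(\binom{p^k}{i}\bigr)=k-v_p(i)$ and the inclusions $\gamma_i(L)^{p^{k-v_p(i)}}\leq \gamma_{p^{\lfloor \log_p i\rfloor}}(L)^{p^{k-\lfloor \log_p i\rfloor}}$. The only piece you invoke rather than prove, the inductive construction of the words $\tau_i\in\gamma_i(L)$, is the standard collection argument, and your reduction of the commutator congruence to the first congruence applied to the pair $(x,[x,y])$ (using $x^y=x[x,y]$ and normality of the modulus in $L$) is likewise the usual one.
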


The next lemma is well-known (see for instance \cite[Lemma 2.2.2]{gorenstein}).

\begin{lem}\label{arbG}
Let $x$ and $y$ be arbitrary elements of a group and suppose that $z=[x,y]$ commutes with both $x$ and $y$. Then $(yx)^i=y^ix^i z^{\frac{i(i-1)}{2}}$.
\end{lem}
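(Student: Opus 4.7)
The plan is to prove this by induction on the exponent $i$, exploiting the fact that by hypothesis $z=[x,y]$ is central in the subgroup $\langle x,y\rangle$, so that powers of $z$ can be moved freely past any word in $x$ and $y$.

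For the base case $i=1$ the identity reduces to $yx = yx\cdot z^{0}$, which is trivial. For the inductive step, assume $(yx)^{i}=y^{i}x^{i}z^{i(i-1)/2}$, and write
\[
(yx)^{i+1}=(yx)^{i}\cdot yx = y^{i}x^{i}z^{i(i-1)/2}\cdot yx.
\]
Since $z$ is central I may pull the factor $z^{i(i-1)/2}$ to the far right, obtaining $y^{i}x^{i}yx\cdot z^{i(i-1)/2}$. The crux of the computation is then to commute the block $x^{i}$ past the single $y$. From $z=[x,y]$ one gets $y^{-1}xy=xz$; raising this identity to the $i$-th power and using that $z$ is central yields
\[
y^{-1}x^{i}y = (xz)^{i} = x^{i}z^{i},
\qquad\text{equivalently}\qquad x^{i}y = yx^{i}z^{i}.
\]

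Substituting this into the previous expression gives
\[
(yx)^{i+1}=y^{i}\bigl(yx^{i}z^{i}\bigr)x\cdot z^{i(i-1)/2}
= y^{i+1}x^{i+1}z^{\,i+i(i-1)/2}
= y^{i+1}x^{i+1}z^{(i+1)i/2},
\]
which is exactly the claim for exponent $i+1$. This closes the induction.

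The argument contains no real obstacle; the only point that requires any care is the bookkeeping of the commutator convention, to make sure that the identity $y^{-1}xy=xz$ (rather than $xz^{-1}$) is being used consistently with the convention $z=[x,y]$ that makes the exponent $i(i-1)/2$ come out correctly. Once that sign is fixed, the two-line induction above suffices.
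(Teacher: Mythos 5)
Your proof is correct: the induction is sound, the sign convention $y^{-1}xy=xz$ matches the paper's commutator convention $z=[x,y]=x^{-1}y^{-1}xy$, and the exponent bookkeeping $i+i(i-1)/2=i(i+1)/2$ checks out. The paper gives no proof of this lemma, citing it as well-known from Gorenstein; your argument is precisely the standard inductive proof referred to there.
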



The remainder of this section is dedicated to describe some structural results of $\chi(G)$ which will be useful in the proof of the main results. The following basic properties are consequences of 
the defining relations of $\chi(G)$ and the commutator rules (see \cite[Proposition 4.1.13]{Sidki} and \cite[Lemma 2.1]{Roc82} for more details). 

\begin{lem} 
\label{basic.chi}
The following relations hold in $\chi(G)$, for all 
$x, y,y_i,z,z_i \in G$.
\begin{itemize}
\item[$(a)$] $[x,y^{\varphi}] = [x^{\varphi},y]$;
\item[$(b)$] $[x,y^{\varphi}]^{z^{\varphi}} = [x,y^{\varphi}]^z$;
\item[$(c)$] $[x,y^{\varphi}]^{\omega(z_1^{\varepsilon_1},\ldots,z_n^{\varepsilon_n})} = [x,y^{\varphi}]^{\omega(z_1,\ldots,z_n)}$ for $\varepsilon_i \in \{1,\varphi\}$ and any word $\omega(z_1,\ldots,z_n) \in G$; 
\item[$(d)$] $[x^{\varphi},y,x]=[x,y,x^{\varphi}]$;
\item[$(e)$] $[x^{\varphi},y_1,\ldots,y_n,x] = [x,y_1,\ldots,y_n,x^{\varphi}]$.
\end{itemize}
\end{lem}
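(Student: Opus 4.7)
The plan is to derive each identity from the defining relation $[g,g^{\varphi}]=1$ by well-chosen substitutions and standard commutator calculus, in the order (a), (b), (c), (d), (e).

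For (a), the natural move is to apply the defining relation to the product $g=xy$, so that $1=[xy,x^{\varphi}y^{\varphi}]$. I would expand using $[ab,c]=[a,c]^{b}[b,c]$ and $[a,bc]=[a,c][a,b]^{c}$; because the defining relation also gives $[x,x^{\varphi}]=[y,y^{\varphi}]=1$, all ``diagonal'' pieces of the expansion vanish, and what survives is an identity between the mixed commutators $[x,y^{\varphi}]$ and $[y,x^{\varphi}]$ up to conjugation. A second expansion, for instance starting from $1=[yx,(yx)^{\varphi}]$, combined with the first, forces the conjugating factors to drop out and yields $[x,y^{\varphi}]=[x^{\varphi},y]$.

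For (b), I would again use the defining relation applied to products, say $g=xz$. Expanding $1=[xz,(xz)^{\varphi}]$ as above produces $[x,z^{\varphi}]^{z}=[x,z^{\varphi}]^{z^{\varphi}}$, which is a special case of (b) with $y=z$. The general case follows by swapping variable roles after applying (a); equivalently, one shows directly that $z^{-1}z^{\varphi}$ commutes with $[x,y^{\varphi}]$. Once (b) is available, (c) is a straightforward induction on the length of the word $\omega$: write $\omega=\omega_{1}\cdot z^{\varepsilon}$, apply the inductive hypothesis to shorten $\omega_{1}$, and apply (b) to the final letter.

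For (d), I would rewrite the left side using (a) as $[x^{\varphi},y,x]=[[x,y^{\varphi}],x]$, and the right side as $[x,y,x^{\varphi}]=[[x,y]^{\varphi},x]=[[x^{\varphi},y^{\varphi}],x]$, again by (a). Thus (d) reduces to the claim $[[x,y^{\varphi}],x]=[[x^{\varphi},y^{\varphi}],x]$. This can be extracted from the Hall--Witt identity applied to the triple $(x,y,x^{\varphi})$, where $[x,x^{\varphi}]=1$ kills one of the three cyclic terms; alternatively, expand $[xx^{\varphi},y^{\varphi}]$ in two ways using $xx^{\varphi}=x^{\varphi}x$ and compare. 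Part (e) is then the iterated form of (d), proved by induction on $n$: once one knows how to commute the outer $x$ past a depth-one commutator of the form $[x^{\varphi},y_{1},\ldots,y_{n}]$, the same swap propagates through every depth. The main obstacle I expect is (a) and (b), since one must resist invoking the commutation of $L(G)$ with $D(G)$ (which is, in the standard treatment, a \emph{consequence} of exactly these identities, so invoking it would be circular). Once (a) and (b) are on solid footing, the remaining parts (c)--(e) are routine.
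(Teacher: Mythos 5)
The paper itself offers no proof of this lemma: it is quoted directly from \cite[Proposition 4.1.13]{Sidki} and \cite[Lemma 2.1]{Roc82}, so your attempt has to be measured against the standard arguments there. Your overall strategy --- expand the defining relation $[g,g^{\varphi}]=1$ on products and run commutator calculus --- is indeed the right one, and your treatment of (c) given (b), and of (d)--(e) via Hall--Witt plus induction, is in the spirit of those references. But there are two places where the sketch, as written, does not close. First, in (a): expanding $1=[xy,(xy)^{\varphi}]$ and $1=[yx,(yx)^{\varphi}]$ gives only the pair of relations $[x,y^{\varphi}]^{y}=[x^{\varphi},y]^{y^{\varphi}}$ and $[x,y^{\varphi}]^{x^{\varphi}}=[x^{\varphi},y]^{x}$, and the conjugating factors do \emph{not} drop out when you combine these: you only learn that $[x,y^{\varphi}]$ commutes with $y(y^{\varphi})^{-1}x(x^{\varphi})^{-1}$. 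The missing move is to expand $1=[xy^{-1},(xy^{-1})^{\varphi}]$ instead; there both surviving terms are conjugated by the \emph{same} element $(yy^{\varphi})^{-1}$ (because $y$ and $y^{\varphi}$ commute), so the conjugation cancels and $[x,y^{\varphi}][y,x^{\varphi}]=1$ falls out, which is (a).

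The more serious gap is in (b). Your expansion of $1=[xz,(xz)^{\varphi}]$ correctly yields the special case $[x,z^{\varphi}]^{z}=[x,z^{\varphi}]^{z^{\varphi}}$, i.e.\ that $[x,y^{\varphi}]$ is centralized by $y^{-1}y^{\varphi}$ (and, by symmetry via (a), by $x^{-1}x^{\varphi}$). But the general statement of (b) asserts that $z^{-1}z^{\varphi}$ centralizes $[x,y^{\varphi}]$ for an \emph{arbitrary third} element $z$; this is precisely the assertion $[L(G),D(G)]=1$, which the paper's introduction flags as ``an important fact'' and which is the substantive content of Sidki's result. ``Swapping variable roles'' can only ever produce the cases $z\in\{x,y\}$, and your fallback --- ``one shows directly that $z^{-1}z^{\varphi}$ commutes with $[x,y^{\varphi}]$'' --- is a restatement of the claim, not an argument. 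You correctly identify that invoking $[L(G),D(G)]=1$ would be circular, but the proposal then leaves exactly that statement unproved; a genuine additional idea is needed here (e.g.\ expanding $[x,(yz)^{\varphi}]$ and $[xz,y^{\varphi}]$ against the already-established special cases to propagate centralization from $l_x,l_y$ to all conjugates and then to all $l_z$, as in \cite{Sidki}). Since (c), and through it the reductions you use in (d) and (e), all rest on the full strength of (b), this gap propagates through the rest of the proof.
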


\begin{rem} \label{rem.R(G)} 
According to \cite[Section 2]{NR2}, we deduce that the following sequence $$ 1 \to R(G) \to  D(G) \to G \wedge G \to 1$$ is exact, where $R(G)=[G,L(G),G^{\varphi}]$ is an abelian group. In particular, $\exp(D(G))$ divides $\exp(G \wedge G) \cdot \exp(R(G))$. Consequently, it seems natural to obtain bounds on the exponent $\exp(D(G))$ using known bounds for $\exp(G \wedge G)$ and $\exp(R(G))$. Unfortunately, this approach does not seem easy to implement in practice, because the known description to $R(G)$ is unsuitable to give bounds for $\exp(R(G))$. Actually, few information can be said to respect of $R(G)$ for an arbitrary group $G$. See \cite[Section 4]{LO} for a description of $R(G)$ for some specific cases of $G$.   
\end{rem}

Let $N$ be a normal subgroup of a finite group $G$. We set $\overline{G}$ for the quotient group $G/N$ and the canonical epimorphism $\pi: G \to \overline{G}$ gives rise to an epimorphism $\widetilde{\pi}: \chi(G) \to \chi(\overline{G})$ such that $g \mapsto \overline{g}$, $g^{\varphi} \mapsto \overline{g^{\varphi}}$, where $\overline{G^{\varphi}} = G^{\varphi}/N^{\varphi}$ is identified with $\overline{G}^{\varphi}$. We will denoted by $\tilde{\pi}_{D}$ the restriction of $\tilde{\pi}$ to $D(G)$.

\begin{lem}(Sidki, \cite[Propositions 4.1.12~(i) and 4.1.13~(ii)--(iii)]{Sidki}) \label{lem.Sidki}
With the above notation we have 

\begin{itemize}
\item[$(a)$] $\ker(\tilde{\pi}_D) = [N,G^{\varphi}] \unlhd \chi(G)$;
\item[$(b)$] The following sequence: $$ 1 \to  [N,G^{\varphi}] \to D(G) \to [\overline{G},\overline{G}^{\varphi}] \to 1 $$ is exact, where $[\overline{G},\overline{G}^{\varphi}] = D(\overline{G}) \leq \chi(\overline{G})$. 
\end{itemize}
\end{lem}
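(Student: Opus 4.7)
The plan is to prove (a) directly; (b) then follows immediately. Throughout we exploit that $D(G) \unlhd \chi(G)$ together with the standard fact $[L(G), D(G)] = 1$: the conjugation action of $\chi(G)$ on $D(G)$ factors through $\chi(G)/L(G) \cong G$, hence a subgroup of $D(G)$ is normal in $\chi(G)$ iff it is $G$-invariant. The inclusion $[N, G^{\varphi}] \subseteq \ker(\tilde{\pi}_{D})$ is immediate since $\tilde{\pi}([n, h^{\varphi}]) = [\overline{n}, \overline{h}^{\varphi}] = 1$ whenever $n \in N$.

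The first nontrivial task—and the main obstacle—is to establish $G$-invariance of $[N, G^{\varphi}]$. For a generator $[n, h^{\varphi}]$ and $g \in G$, Lemma \ref{basic.chi}(c) rewrites $[n, h^{\varphi}]^{g}$ as $[n, h^{\varphi}]^{g^{\varphi}}$. Expanding via $(h^{\varphi})^{g^{\varphi}} = (h^{g})^{\varphi}$ (conjugation internal to $G^{\varphi}$) and $n^{g^{\varphi}} = n \cdot [n, g^{\varphi}]$, then using the standard identity $[xy, z] = [x, z]^{y}[y, z]$, one obtains a leading term $[n, (h^{g})^{\varphi}] \in [N, G^{\varphi}]$ together with a correction $[[n, g^{\varphi}], (h^{g})^{\varphi}]$; the latter is a commutator of an element of $D(G)$ with an element of $G^{\varphi}$, which one reduces to a product of generators of $[N, G^{\varphi}]$ by iterated application of Lemma \ref{basic.chi}(a),(d),(e). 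Once normality is secured, set $K := \ker(\tilde{\pi})$; because $\tilde{\pi}$ is induced by $\pi: G \to \overline{G}$ and is the quotient by the defining relations of $\chi(\overline{G})$, $K$ is precisely the normal closure in $\chi(G)$ of $N \cup N^{\varphi}$.

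Now consider $M := N \cdot N^{\varphi} \cdot [N, G^{\varphi}]$. Using $nm^{\varphi} = m^{\varphi} n \cdot [n, m^{\varphi}]$ together with normality of $[N, G^{\varphi}]$, every word in $N \cup N^{\varphi} \cup [N, G^{\varphi}]$ can be brought to the standard form $n \cdot m^{\varphi} \cdot d$, so $M$ is a subgroup. Moreover $M \unlhd \chi(G)$: for instance $n^{g^{\varphi}} = n[n, g^{\varphi}] \in M$ and $(n^{\varphi})^{g} = n^{\varphi}[n, g^{\varphi}] \in M$ by Lemma \ref{basic.chi}(a), while $[N, G^{\varphi}]$ is already normal. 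Since $M$ is a normal subgroup of $\chi(G)$ containing $N \cup N^{\varphi}$, we have $K \subseteq M$; the reverse inclusion is automatic, so $K = M$. Finally, the projection $\chi(G) \to G \times G$ with kernel $D(G)$ maps $N \cdot N^{\varphi}$ injectively onto $N \times N$, so $(N \cdot N^{\varphi}) \cap D(G) = 1$ and $K \cap D(G) = [N, G^{\varphi}]$, which is exactly $\ker(\tilde{\pi}_{D})$. This proves (a). Part (b) is then immediate: $\tilde{\pi}_{D}$ sends the generators $[g, h^{\varphi}]$ of $D(G)$ onto the generators $[\overline{g}, \overline{h}^{\varphi}]$ of $D(\overline{G}) = [\overline{G}, \overline{G}^{\varphi}]$, hence is surjective, and (a) identifies its kernel, yielding the asserted short exact sequence.
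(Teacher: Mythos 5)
The paper offers no proof of this lemma---it is quoted from Sidki---so your attempt at a self-contained argument is evaluated on its own terms. Its overall architecture is sound: identifying $\ker(\tilde{\pi})$ with the normal closure of $N\cup N^{\varphi}$, showing $M=N\cdot N^{\varphi}\cdot[N,G^{\varphi}]$ is that normal closure, and cutting down to $D(G)$ via the projection onto $G\times G$ all work once normality of $[N,G^{\varphi}]$ in $\chi(G)$ is established. The genuine gap is precisely there. You dispose of the correction term $[[n,g^{\varphi}],(h^{g})^{\varphi}]$ ``by iterated application of Lemma~\ref{basic.chi}(a),(d),(e)'', but (d) and (e) only move $\varphi$ between a repeated first and last entry in commutators of the shape $[x^{\varphi},y_1,\dots,y_n,x]$ and do not apply to a commutator of this form. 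Worse, the natural iteration is circular: putting $u=[n,g^{\varphi}]$ and $k=h^{g}$, Lemma~\ref{basic.chi}(b) gives $[u,k^{\varphi}]=u^{-1}u^{k^{\varphi}}=u^{-1}u^{k}$, so you are again asking whether a generator of $[N,G^{\varphi}]$ conjugated by an element of $G$ lies in $[N,G^{\varphi}]$ --- exactly the statement under proof. Further expansion only reproduces commutators of the same type, so the ``iteration'' has no terminating mechanism as written.

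The fix is short and renders your expansion unnecessary. For arbitrary subgroups $A,B$ of any group, $[A,B]$ is normalized by $B$ (from $[a,b]^{b'}=[a,b']^{-1}[a,bb']$), so $[N,G^{\varphi}]$ is normalized by $G^{\varphi}$; since $[N,G^{\varphi}]\leq D(G)$ and conjugation by $g$ and by $g^{\varphi}$ agree on $D(G)$ by Lemma~\ref{basic.chi}(b)--(c), it is also normalized by $G$, hence by $\chi(G)=\langle G,G^{\varphi}\rangle$. With that substituted, the remainder of your argument is correct, though you should also spell out the one-line verification behind ``$K$ is the normal closure of $N\cup N^{\varphi}$'': the quotient of $\chi(G)$ by that normal closure satisfies the defining relations of $\chi(\overline{G})$ because $[g,(gn)^{\varphi}]=[g,n^{\varphi}][g,g^{\varphi}]^{n^{\varphi}}=[g,n^{\varphi}]$ lies in the normal closure for every lift $gn$ of $\overline{g}$.
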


In \cite{Sidki}, Sidki proved that if $G$ is a finite $p$-group, then so is $\chi(G)$. Later, in \cite{GRS}, Gupta, Rocco and Sidki proved that if $G$ is a $2$-generator nilpotent group of class $c$, then $\chi(G)$ is nilpotent of class at most $c+1$. The next two results are consequences of these above results. 

\begin{lem}\label{chi_sub_H}
Let $H$ be a $2$-generator $p$-subgroup of a finite group $G$. If $H$ has class $c$, then $\langle H,H^{\varphi} \rangle \leq \chi(G)$ is a $p$-group of class at most $c+1$. 
\end{lem}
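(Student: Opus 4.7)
The plan is to realize $\langle H, H^{\varphi}\rangle$ as a homomorphic image of $\chi(H)$ and then invoke the two results of Sidki and of Gupta--Rocco--Sidki cited immediately before the statement. The key observation is that the defining relations of $\chi(H)$, namely $[h,h^{\varphi}]=1$ for all $h\in H$, are a subset of the defining relations of $\chi(G)$. Consequently, the assignment $h\mapsto h$ and $h^{\varphi}\mapsto h^{\varphi}$ extends (by von Dyck) to a well-defined group homomorphism $\psi\colon \chi(H)\to \chi(G)$.

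The image of $\psi$ is exactly the subgroup generated by $H\cup H^{\varphi}$ inside $\chi(G)$, that is $\langle H,H^{\varphi}\rangle$. Therefore $\langle H,H^{\varphi}\rangle$ is a quotient of $\chi(H)$, and hence inherits from $\chi(H)$ any property that passes to quotients --- in particular, being a finite $p$-group and having bounded nilpotency class.

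Now I would apply the two cited facts to $H$ itself. Since $H$ is a finite $p$-group, Sidki's theorem gives that $\chi(H)$ is a finite $p$-group. Since $H$ is additionally assumed to be $2$-generated and of nilpotency class $c$, the Gupta--Rocco--Sidki theorem gives that $\chi(H)$ is nilpotent of class at most $c+1$. Both properties descend to the quotient $\langle H,H^{\varphi}\rangle$, which therefore is a $p$-group of class at most $c+1$, as required.

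There is essentially no obstacle: the whole argument reduces to checking that the inclusion $H\hookrightarrow G$ induces a homomorphism $\chi(H)\to\chi(G)$, which is immediate from the presentation since the relators for $\chi(H)$ involve only elements of $H$ and $H^{\varphi}$ and are satisfied in $\chi(G)$. The only point to be slightly careful about is that we are genuinely using $H$ being $2$-generated (so that the GRS bound applies) and $H$ being a finite $p$-group (so that Sidki's finiteness result applies); neither hypothesis can be dropped in the cited theorems.
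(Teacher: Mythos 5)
Your argument is correct and is essentially identical to the paper's proof: both realize $\langle H,H^{\varphi}\rangle$ as an epimorphic image of $\chi(H)$ and then apply Sidki's finiteness criterion together with the Gupta--Rocco--Sidki class bound for $2$-generator nilpotent groups. You merely make explicit the von Dyck step that the paper leaves implicit.
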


\begin{proof}
Since $\langle H,H^{\varphi} \rangle \leq \chi(G)$ is an epimorphic image of $\chi(H)$, it suffices to prove that $\chi(H)$ is a finite $p$-group of class at most $c+1$. 

As $H$ is a finite $p$-group, then $\chi(H)$ is also a finite $p$-group (Sidki's criterion, \cite[Theorem C (i)]{Sidki}). Now, it remains to conclude that $\langle H,H^{\varphi}\rangle$ has nilpotency class at most $c+1$. Now, the result follows from \cite[Theorem 3.1]{GRS}.
\end{proof}

\begin{lem}\label{xy}
Let $G$ be a $p$-group and $x,g \in G$, such that $H=\langle x, [x,g]\rangle$ is nilpotent of class $c$. Then $\tilde{H}= \langle x, [x,g^\varphi]\rangle \leq \chi(G)$ is nilpotent of class at most $c+1$.
\begin{proof}
Set $\tilde{X} = \{x,[x,g^{\varphi}]\}$. By Lemma \ref{basic.chi}~(c), $[x^{\varphi},g,x] = [x,g^{\varphi},x] = [x,g,x^{\varphi}]$. From this we can write $\gamma_{c+2}(\tilde{H})$ as follows 
\begin{eqnarray*}
\gamma_{c+2}(\tilde{H}) & \leq  & \langle  [\alpha_1,\ldots,\alpha_{c+2}] \mid \alpha_i \in \tilde{X}]\} \rangle^{\chi(G)} \\ 
& = & \langle  [[x,g^{\varphi}],x,\alpha_3,\ldots,\alpha_{c+2}], [x,[x,g^{\varphi}],\alpha_3,\ldots,\alpha_{c+2}] \mid \alpha_i \in \tilde{X} \rangle^{\chi(G)}.
\end{eqnarray*}

Now, it is sufficient to prove that any previous basic commutator is trivial. First assume that $\alpha_{c+2} = [x,g^{\varphi}]$. According to \cite[Proposition 4.1.7]{Sidki}, we deduce that $W(G) \leq Z(D(G))$. Since $\left[[x,g^\varphi],x, \alpha_3, \cdots, \alpha_{c+1} \right]$, $\left[x,[x,g^\varphi],  \alpha_3, \cdots, \alpha_{c+1} \right] \in W(G) \leq Z(D(G))$, for any $\alpha_3,\ldots,\alpha_{c+1}\in \tilde{X}$, it follows that 
$$
\begin{array}{rcl}
  \left[[x,g^\varphi],x, \alpha_3, \cdots, \alpha_{c+1}, [x,g^{\varphi}] \right]   & = & \left[[x,g^\varphi],x,  \alpha_3, \cdots, \alpha_{c+1},  [x,g^{\varphi}] \right] = 1
\end{array}
$$
and
$$
\begin{array}{rcl}
  \left[x,[x,g^\varphi], \alpha_3, \cdots, \alpha_{c+1}, [x,g^{\varphi}] \right]   & = & \left[x,[x,g^\varphi],  \alpha_3, \cdots, \alpha_{c+1},  [x,g^{\varphi}] \right] = 1.
\end{array}
$$

Now, it remains to consider $\alpha_{c+2} = x$. By Lemma \ref{basic.chi}~(d), we can rewrite the previous basic commutators of $\gamma_{c+2}(\tilde{H})$ as follows: 

$$[[x,g^{\varphi}],x,\alpha_3,\ldots,\alpha_{c+2}] = [[x,g^{\varphi}],x,\beta_3,\ldots,\beta_{c+2}]$$ 
and 
$$[x,[x,g^{\varphi}],\alpha_3,\ldots,\alpha_{c+2}] = [x,[x,g^{\varphi}],\beta_3,\ldots,\beta_{c+2}]$$ where $\beta_i\in \{x,[x,g]\}$. By Lemma \ref{basic.chi} (a) and (e), $[x,[x,g^{\varphi}]] = [x^{\varphi},[x,g]]$. It follows that
$$
\begin{array}{rcl}
  \left[[x,g^\varphi],x, \beta_3, \cdots, \beta_{c+1}, x \right]   & = & \left[[x^{\varphi},g],x,\beta_3, \cdots, \beta_{c+1},x \right] \\
  & = & \left[[x,g],x,\beta_3, \cdots, \beta_{c+1},x^\varphi \right] \in \left[ \gamma_{c+1}(H), x^\varphi \right]=1.
\end{array}
$$

$$
\begin{array}{rcl}
  \left[x,[x,g^\varphi],\beta_3, \cdots, \beta_{c+1}, x \right]   & = & \left[x^{\varphi},[x,g],\beta_3, \cdots, \beta_{c+1},x \right] \\
  & = & \left[x,[x,g],\beta_3, \cdots, \beta_{c+1},x^\varphi \right] \in \left[ \gamma_{c+1}(H), x^\varphi \right]=1.
\end{array}
$$
because $H$ has class $c$. The proof is complete. 
\end{proof}
\end{lem}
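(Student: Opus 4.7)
My plan is to show that $\gamma_{c+2}(\tilde{H}) = 1$. Since $\tilde{H}$ is generated by only two elements, namely $x$ and $y := [x,g^{\varphi}]$, the term $\gamma_{c+2}(\tilde{H})$ is the normal closure in $\chi(G)$ of the left-normed basic commutators $[\alpha_1,\alpha_2,\ldots,\alpha_{c+2}]$ with each $\alpha_i \in \{x,y\}$ and $\alpha_1 \neq \alpha_2$ (otherwise the bracket is trivial). It therefore suffices to show that each such commutator vanishes, and I would split the argument according to the final entry $\alpha_{c+2}$.

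When $\alpha_{c+2} = y$, the commutator has the form $[u,y]$ where $u$ is a left-normed commutator of weight $c+1$ in $\{x,y\}$. Since $y \in D(G)$, clearly $u \in D(G)$. To show also $u \in L(G)$, I would apply the canonical projection $\chi(G) \to G$ (with kernel $L(G)$) that identifies $g^{\varphi}$ with $g$: this sends $y \mapsto [x,g]$ and $x \mapsto x$, so $u$ is mapped to a weight-$(c+1)$ commutator in the generators $\{x,[x,g]\}$ of $H$, which is trivial since $H$ has class $c$. Therefore $u \in L(G) \cap D(G) = W(G)$, and using $W(G) \leq Z(D(G))$ (Sidki's Proposition 4.1.7) I conclude $[u,y]=1$.

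For $\alpha_{c+2} = x$, I would use the identities of Lemma \ref{basic.chi} to push the $\varphi$-label to the outermost entry. First, (a) gives $y = [x,g^{\varphi}] = [x^{\varphi},g]$, letting me replace $y$ by $[x^{\varphi},g]$ in the innermost bracket; then repeated use of (d) and (e) exchanges the positions of the $x$ and $x^{\varphi}$, eventually rewriting the commutator in the form $[\beta_1,\beta_2,\ldots,\beta_{c+1}, x^{\varphi}]$ with $\beta_i \in \{x,[x,g]\}\subseteq H$. The inner segment then lies in $\gamma_{c+1}(H) = 1$, so the whole commutator vanishes.

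The main obstacle I anticipate is the bookkeeping in the second case: each intermediate entry $\alpha_i$ is independently either $x$ or $y$, and the swaps coming from (a), (d), (e) must propagate cleanly through the whole chain. In particular, after substituting $[x^{\varphi},g]$ for $y$ at the inner position, conjugations by the remaining intermediate $\alpha_i$ must not reintroduce problematic $\varphi$-labels; here Lemma \ref{basic.chi}(c), which says that conjugation of a $D(G)$-element by $z^{\varphi}$ agrees with conjugation by $z$, does the bulk of the work and allows me to freely convert any intermediate $\alpha_i = y$ to an expression involving $[x,g]$ without affecting the outer commutator.
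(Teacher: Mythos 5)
Your proposal is correct and follows essentially the same route as the paper's proof: split on the last entry of the left-normed commutator, dispose of the case where the last entry is $[x,g^{\varphi}]$ via $W(G)\leq Z(D(G))$, and dispose of the case where the last entry is $x$ by using Lemma \ref{basic.chi} to convert intermediate entries to elements of $\{x,[x,g]\}$ and push the single remaining $\varphi$-label to the outermost position, landing in $[\gamma_{c+1}(H),x^{\varphi}]=1$. Your justification that the weight-$(c+1)$ prefix lies in $W(G)$ (it is in $D(G)$ by normality and in $L(G)$ because its image under the projection $\chi(G)\to G$ lands in $\gamma_{c+1}(H)=1$) is a welcome detail that the paper's proof merely asserts.
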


\section{Proof of Theorem \ref{thm:solvable}}

The first lemma deduce the structure (and exponent) of the kernel $\ker(\tilde{\pi}_{D})$, where $\pi: G \to G/A$ the canonical homomorphism and $A$ is an abelian normal subgroup of $G$ (cf. Lemma \ref{lem.Sidki}, above). This lemma will be essential in the proof of Theorem \ref{thm:solvable}.    

\begin{lem}\label{lem.kernel}
Let $G$ be a finite group and $A$ an abelian normal subgroup of $G$. Then $[A,G^{\varphi}]$ is nilpotent of class at most $2$. Moreover,  
\begin{itemize}
    \item[$(a)$] $\exp([A,G^{\varphi}])$ divides $\exp(A)$, if  $\exp(A)$ is odd;
    \item[$(b)$] $\exp([A,G^{\varphi}])$ divides $2 \cdot \exp(A)$, if $\exp(A)$ is even. 
\end{itemize}
\end{lem}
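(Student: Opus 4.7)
The plan is to first establish the nilpotency class bound using the canonical epimorphism $\chi(G)\to T(G)$, and then derive the exponent bounds by applying P.~Hall's collection formula (Theorem~\ref{thm.Hall}) inside two carefully chosen $2$-generator subgroups of $\chi(G)$.

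For the class bound I use the quotient $\chi(G)\to T(G)\leq G\times G\times G$, which has kernel $W(G)$ and sends $[a,g^\varphi]$ to $(1,[a,g],1)$. Thus the image of $[A,G^\varphi]$ lies in $\{1\}\times[A,G]\times\{1\}$, which is abelian because $A$ is abelian and normal. The kernel of the restriction is $[A,G^\varphi]\cap W(G)$; since $W(G)\leq Z(D(G))$ and $[A,G^\varphi]\leq D(G)$, this kernel is central in $[A,G^\varphi]$. Hence $[A,G^\varphi]$ is a central extension of an abelian group and has class at most $2$.

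For the exponent bounds I reduce to the case of $A$ a $p$-group via the characteristic Sylow decomposition of $A$ and the resulting direct-product decomposition of the finite nilpotent group $[A,G^\varphi]$ into its Sylow factors. So assume $\exp(A)=p^k$, fix $a\in A$, $g\in G$, and set $b=[a,g]\in A$. The analysis proceeds through two auxiliary $2$-generator subgroups,
\[
L=\langle a,[a,g^\varphi]\rangle\quad\text{and}\quad L'=\langle b,[b,a^\varphi]\rangle.
\]
A short calculation using parts (a) and (d) of Lemma~\ref{basic.chi} yields
\[
[a,[a,g^\varphi]]=[a,[a^\varphi,g]]=[a^\varphi,g,a]^{-1}=[a,g,a^\varphi]^{-1}=[b,a^\varphi]^{-1}.
\]
Since $a,b\in A$ commute, the $T(G)$-image of $[b,a^\varphi]$ is $(1,[b,a],1)=1$, so $[b,a^\varphi]\in W(G)\leq Z(D(G))$ and in particular commutes with $[a,g^\varphi]\in D(G)$. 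Applying Lemma~\ref{basic.chi}(d) once with $x=a,y=b$ and once with $x=b,y=a$, using $[a,b]=1$, shows that $[b,a^\varphi]$ also commutes with $a$ and with $b$. Therefore $L$ has class at most $2$ with $\gamma_2(L)=\langle[b,a^\varphi]\rangle$ central in $L$, while $L'$ is abelian.

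Hall's formula now bootstraps in two steps. Inside the abelian $L'$ it collapses to $[b,a^\varphi]^{p^k}=[b^{p^k},a^\varphi]=1$, so $\exp(\gamma_2(L))$ divides $p^k$. Inside the class-$2$ subgroup $L$, Hall's formula applied to $[a,g^\varphi]^{p^k}$ reduces the question modulo $\gamma_2(L)^{p^k}$ when $p$ is odd (which is trivial) and modulo $\gamma_2(L)^{p^{k-1}}$ when $p=2$ (which has exponent at most $2$). Since $a^{p^k}=1$, this yields $[a,g^\varphi]^{p^k}=1$ for odd $p$ and $[a,g^\varphi]^{2p^k}=1$ for $p=2$. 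Finally, the class-$2$ identities $(uv)^n=u^nv^n[v,u]^{n(n-1)/2}$ and $[v^n,u]=[v,u]^n$, applied inductively on word length with $v^n$ staying central at each step, lift the generator bound to all of $[A,G^\varphi]$. The main obstacle I anticipate is the structural step of identifying $[a,[a,g^\varphi]]$ with $[b,a^\varphi]^{-1}$ and checking that $[b,a^\varphi]$ is simultaneously central in $D(G)$ and centralised by both $a$ and $b$; once these are in hand the Hall computation and the class-$2$ lifting are routine.
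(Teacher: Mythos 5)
Your proposal is correct and, at its core, runs along the same lines as the paper's proof: the paper also reduces everything to the single commutator $[a,g^{\varphi},a]=[a,g,a^{\varphi}]$ (your $[b,a^{\varphi}]=[a,[a,g^{\varphi}]]^{-1}$), checks that it commutes with $a$ and with $[a,g^{\varphi}]$, and then collects $[a^{i},g^{\varphi}]=[a,g^{\varphi}]^{i}[a,g^{\varphi},a]^{i(i-1)/2}$. The differences are in the packaging. The paper obtains the class bound by conjugating the generators of $D(G)$ directly via Lemma~\ref{basic.chi}(c) instead of passing through $T(G)$ and $W(G)\le Z(D(G))$, and it uses the elementary Lemma~\ref{arbG} in place of Hall's formula; this also makes your Sylow reduction unnecessary and avoids a small circularity in your version, namely that Theorem~\ref{thm.Hall} is stated for $p$-groups while $L=\langle a,[a,g^{\varphi}]\rangle$ is not yet known to be one at that stage (in a class-$2$ group the relevant instance of Hall's formula is exactly Lemma~\ref{arbG}, so nothing is lost). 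The one step you should make explicit is the lifting from generators to the whole group when $p=2$: a class-$2$ group generated by elements of order dividing $2^{k+1}$ need not have exponent $2^{k+1}$ (e.g.\ the dihedral group of order $8$), and the naive estimate only gives $4\exp(A)$. The induction on word length goes through because $[v,u]^{2^{k}(2^{k+1}-1)}=[v^{2^{k}},u]^{-1}$ and $v^{2^{k}}=[a,g^{\varphi}]^{2^{k}}$ lies in $\gamma_{2}(L)^{2^{k-1}}\le\langle[b,a^{\varphi}]\rangle\le Z(D(G))$ --- a fact your own computation supplies but your final sentence does not invoke; the paper is equally terse at this point.
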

\begin{proof}
Note that $[[a_1,g_1^{\varphi}],[a_2,g_2^{\varphi}]] \in Z(D(G))$, $a_i \in A$, $g_i \in G$. Indeed, by Lemma \ref{basic.chi} (c),  

$$[g,h^{\varphi}]^{[[a_1,g_1^{\varphi}],[a_2,g_2^{\varphi}]]} = [g,h^{\varphi}]^{[[a_1,g_1],[a_1,g_1]]} = [g,h^{\varphi}],$$ for any $g,h \in G$. From this we deduce that $[A,G^{\varphi}]$ is nilpotent of class at most $2$. 

By Lemma \ref{basic.chi} (a) and (d) we have that $[a,g^{\varphi},a]^a=[a,g,a^{\varphi}]^a=[a,g^{\varphi},a]$ since $[a,g,a]=1$ and $[a^{\varphi},a]=1$. Hence $[a,g^{\varphi},a]$ commute with $a$. It is also clear that by Lemma \ref{basic.chi}~(c) we have that $[a,g^{\varphi},a]$ commute with $[a,g^{\varphi}]$.  Now, using Lemma \ref{arbG} with $x=[a,g^{\varphi}]$ and $y=a$ we obtain 
$$(a[a,g^{\varphi}])^i = a^i [a,g^{\varphi}]^i [a,g^{\varphi},a]^{\frac{i(i-1)}{2}}$$

$$(a^{g^{\varphi}})^i=(a^i)^{g^{\varphi}} = a^i [a,g^{\varphi}]^i [a,g^{\varphi},a]^{\frac{i(i-1)}{2}}.$$ Then

$$a^{-i}(a^{i})^{g^{\varphi}}=[a^i,g^{\varphi}] = [a,g^{\varphi}]^i[a,g^{\varphi},a]^{\frac{i\cdot (i-1)}{2}},i \geq 2.$$

Since $[a,g^{\varphi},a]$ commute with $a$ we have that $[a,g^{\varphi},a^i]=[a,g^{\varphi},a]^i$. Then $|[a,g^{\varphi}]|$ divides $|a|$. It allow us to conclude that $\exp([A,G^{\varphi}])$ divides $\exp(A)$ if $|A|$ is odd, since $[A,G^{\varphi}]$ is nilpotent of class at most 2. On the other hand, if $\exp(A)=e$ is even, then $\frac{2e\cdot (2e-1)}{2}$ is divided by $e$. Then in this case $\exp([A,G^{\varphi}])$ divides $2\cdot \exp(A)$. The proof is complete.
\end{proof}

We are now in a position to prove Theorem \ref{thm:solvable}. 

\begin{proof}[Proof of Theorem \ref{thm:solvable}]
Since $\chi(G)$ is an extension of $D(G)$ by $G \times G$, it suffices to prove that $\exp(D(G))$ divides $\exp(G)^d$, if $\exp(G)$ is odd and $\exp(D(G))$ divides $2^{d-1}\cdot \exp(G)^d$ if $\exp(G)$ is even. 

We argue by induction on the derived length of $G$,  $d$. If $G$ is abelian, the result follows from Lemma \ref{lem.kernel}. Suppose by induction hypothesis that if $K$ is a finite solvable group with derived length $s< d$, then $\exp(D(K))$ divides $\exp(K)^{s-1}$ if $|K|$ is odd or $2^{s-2}\cdot \exp(K)^{s-1}$ if $|K|$ is even. 

Consider the last nontrivial term of the derived series of $G$ and set $\overline{G} = G/G^{(d-1)}$. According to Lemma \ref{lem.Sidki}~(b), we obtain the following exact sequence
$$ 
[G^{(d-1)},G^{\varphi}]  \hookrightarrow D(G)\twoheadrightarrow D(\overline{G}).  
$$ 
Consequently, $$\exp(D(G)) \ \ \text{divides} \ \ \exp \left( D(\overline{G}) \right) \cdot \exp([G^{(d-1)},G^{\varphi}]).$$ 
By Lemma \ref{lem.kernel}, $\exp([G^{(d-1)},G^{\varphi}])$ divides $\exp(G^{(d-1)})$ if $\exp(G^{(d-1)})$ is odd and divides $2\cdot \exp(G^{(d-1)})$ if $\exp(G^{(d-1)})$ is even. On the other hand, the derived length of $\overline{G}$ is $d-1$ and then we use induction hypothesis to establish the formula.
\end{proof}

As an immediate consequence of the above result we obtain Sambonet's theorem \cite[Theorem A]{Sambonet} concerning the exponent of the Schur multiplier of $p$-groups of derived length $d$. More precisely, we deduce the following extension.  

\begin{cor}
Let $G$ be a finite $p$-group of derived length $d$. 
\begin{itemize}
    \item[i)] Then $\exp(M(G))$ and $\exp(G \wedge G)$ divide  $\exp(G)^{d}$, $p\geq 3$.
    \item[ii)] Then $\exp(M(G))$ and $\exp(G \wedge G)$ divide $2^{d-1} \cdot \exp(G)^{d}$, $p=2$.
\end{itemize}
\end{cor}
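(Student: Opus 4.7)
The plan is to exploit the fact that both $M(G)$ and $G \wedge G$ are sections of $D(G)$, so any upper bound on $\exp(D(G))$ automatically transfers to them. Concretely, from the exact sequence displayed in the introduction, $M(G) \cong W(G)/R(G)$ with $W(G) \leq D(G)$, and from Remark \ref{rem.R(G)}, $G \wedge G \cong D(G)/R(G)$. Consequently, both $\exp(M(G))$ and $\exp(G \wedge G)$ divide $\exp(D(G))$.

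Now, since $G$ is a finite $p$-group of derived length $d$, it is in particular a finite solvable group of derived length $d$, so Theorem \ref{thm:solvable} applies. When $p \geq 3$, the order $|G|$ is odd, so part (i) of Theorem \ref{thm:solvable} yields that $\exp(D(G))$ divides $\exp(G)^d$; combining with the divisibility from the previous paragraph gives the desired bound for $\exp(M(G))$ and $\exp(G \wedge G)$. When $p = 2$, the order $|G|$ is even, so part (ii) of Theorem \ref{thm:solvable} gives that $\exp(D(G))$ divides $2^{d-1}\cdot \exp(G)^d$, and we conclude identically.

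There is no real obstacle here: the whole argument is a one-line reduction to Theorem \ref{thm:solvable} via the two short exact sequences identifying $M(G)$ and $G\wedge G$ as sections of $D(G)$. The only small thing to note is that the case split on parity of $|G|$ in Theorem \ref{thm:solvable} matches the case split on $p$ here because for a finite $p$-group, $|G|$ is odd precisely when $p$ is odd.
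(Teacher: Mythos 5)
Your proposal is correct and matches the paper's intended argument: the corollary is stated there as an immediate consequence of Theorem \ref{thm:solvable}, using exactly the identifications $M(G)\cong W(G)/R(G)$ and $G\wedge G\cong D(G)/R(G)$ as sections of $D(G)$, so that their exponents divide $\exp(D(G))$. Your observation that the parity split on $|G|$ coincides with the split on $p$ for a finite $p$-group is the only point needing mention, and you handle it correctly.
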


\section{Proof of Theorems \ref{nilG} and \ref{thm:nilpotent}}

A finite $p$-group $G$ is called regular if $x^py^p \equiv~(xy)^p \mod  H^p$, for every $x,y \in G$ and $H = H(x,y) = \langle x,y\rangle'$. It is well-known that if $G$ is a regular $p$-group and $G$ is generated by a set $X$, then $\exp(G) = \max\{|x| \mid x \in X\}$ (see \cite[Corollary 2.11]{G}). We are now in a position to prove Theorem \ref{nilG}.

\begin{proof}[Proof of Theorem \ref{nilG}]
Since $\chi(G)$ is an extension of $D(G)$ by $G \times G$, it is sufficient to show that $\exp(D(G))$ divides $\exp(G)$. 

Let $\omega \in \gamma_{p-1}([G,G^{\varphi}])$. By Lemma \ref{basic.chi} (c) we have that $[h,g^{\varphi}]^{\omega}=[h,g^{\varphi}]$ since $\gamma_{p-1}([G,G])=1$. Thus, $\gamma_{p-1}([G,G^{\varphi}]) \leq Z(D(G))$, in particular $D(G)$ has class at most $p-1$. Hence the subgroup $D(G)$ is a regular $p$-group and it is sufficient to proof that each generator has order at most $\exp(G)$ (cf. \cite[Corollary 2.11]{G}).

Let $[x,g^{\varphi}]$ where $x,g\in G$. First, we will show that if $x^{p^e}$, then $[x,g^{\varphi}]^{p^e} \in [G',G^{\varphi}]^{p^e}$. Let $H$ be the subgroup of $G$ generated by $\langle x, g \rangle$. It is clear that $\gamma_p(H)=1$. Hence by Lemma \ref{chi_sub_H} the subgroup $\tilde{H}$ of $\chi(G)$ generated by  $\langle x, g, x^{\varphi},g^{\varphi} \rangle$ has class at most $p$. By Theorem \ref{thm.Hall} we have that \[
[x,g^{\varphi}]^{p^e}\equiv [x^{p^e}, g^{\varphi}] \pmod {\gamma_{2}(L)^{p^e}\gamma_{p}(L)^{p^{e-1}}\gamma_{p^2}(L)^{p^{e-2}}\ldots \gamma_{p^e}(L)},
\]
where $L=\langle x,[x,g^{\varphi}]\rangle$. Since $\gamma_{p}(L)\leq \gamma_{p+1}(\tilde{H})=1$ we obtain that
$$
[x,g^{\varphi}]^{p^e} \in  \gamma_{2}(L)^{p^e}.
$$ On the other hand, since $[x,g^{\varphi},x]=[x,g,x^{\varphi}]\in [G',G^{\varphi}]$ we conclude that $[x,g^{\varphi}]^{p^e} \in \gamma_{2}(L)^{p^e} \leq [G',G^{\varphi}]^{p^e}$. Thus $[G,G^{\varphi}]^{p^e} \leq [G',G^{\varphi}]^{p^e}$. Now repeating the process with $a\in G^{'}$ and $b^{\varphi}\in G^{\varphi}$ we obtain that $[G,G^{\varphi}]^{p^e} \leq [G',G^{\varphi}]^{p^e} \leq [\gamma_3(G),G^{\varphi}]^{p^e}$. Now it is clear that using this process we obtain that $[G,G^{\varphi}]^{p^e}=1$, since $\gamma_p(G)=1$.
\end{proof}

\begin{lem}\label{lemP1}
Let $G$ be a $p$-group of class $c$. Let $A=\gamma_{\left\lceil\frac{c+1}{p-1}\right\rceil}(G)$. Then $[A,G^{\varphi}]$ has class at most $p-1$ and $\exp([A,G^{\varphi}])$ divides $\exp(A)$.  
\end{lem}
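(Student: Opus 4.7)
Set $k:=\lceil (c+1)/(p-1)\rceil$, so that $A=\gamma_k(G)$ and $k(p-1)\geq c+1$. The plan is to handle the two assertions separately, mirroring the strategy used at the start of the proof of Theorem \ref{nilG}. For the class bound I would show that $\gamma_{p-1}([A,G^{\varphi}])\leq Z(D(G))$. Given $\omega\in\gamma_{p-1}([A,G^{\varphi}])$, Lemma \ref{basic.chi}(c) lets me replace each $g^{\varphi}$ occurring inside $\omega$ by $g$ without changing the conjugation action of $\omega$ on any $[h,g^{\varphi}]$. The resulting ``dephied'' word $\omega'$ is the analogous $(p-1)$-fold iterated commutator built from elements of $[A,G]\leq \gamma_{k+1}(G)$, so $\omega'\in \gamma_{p-1}([A,G])\leq \gamma_{(p-1)(k+1)}(G)$. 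Since $(p-1)(k+1)\geq (c+1)+(p-1)>c$, this group is trivial, whence $\omega$ centralises every generator of $D(G)$, and $[A,G^{\varphi}]$ has class at most $p-1$.

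With the class bound in hand, $[A,G^{\varphi}]$ and each of its subgroups are regular $p$-groups, so by \cite[Corollary 2.11]{G} it suffices to verify that every generator $[a,g^{\varphi}]$ has trivial $p^e$th power, where $p^e:=\exp(A)$. I would run a downward induction on $j\in\{k,k+1,\ldots,c+1\}$, proving that $P_j:=[\gamma_j(G),G^{\varphi}]$ has exponent dividing $p^e$. The base case $j=c+1$ is trivial and $j=k$ is the desired statement. In the inductive step, fix $a\in\gamma_j(G)$ and $g\in G$, and set $L=\langle a,[a,g^{\varphi}]\rangle$. The subgroup $H=\langle a,[a,g]\rangle$ lies in $\gamma_j(G)$, so $\gamma_m(H)\leq \gamma_{mj}(G)$ bounds its class by $\lceil (c+1)/j\rceil -1$, which is at most $p-2$ because $j\geq k$ and $k(p-1)\geq c+1$. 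Lemma \ref{xy} then yields $L$ of class at most $p-1$, so $\gamma_p(L)=1$, and combining Hall's formula (Theorem \ref{thm.Hall}) with $a^{p^e}=1$ collapses the congruence to
\[
[a,g^{\varphi}]^{p^e}\in \gamma_2(L)^{p^e}.
\]
Finally, $\gamma_2(L)$ is the normal closure in $L$ of $[a,[a,g^{\varphi}]]$, and Lemma \ref{basic.chi}(a),(d) rewrite this element as $[[a,g],a^{\varphi}]^{-1}\in [\gamma_{j+1}(G),G^{\varphi}]=P_{j+1}$. Since $P_{j+1}$ is normal in $\chi(G)$ by Lemma \ref{lem.Sidki}(a), we get $\gamma_2(L)\leq P_{j+1}$, hence $[a,g^{\varphi}]^{p^e}\in P_{j+1}^{p^e}=1$ by the inductive hypothesis.

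The main obstacle will be the numerical juggling around the hypothesis $k(p-1)\geq c+1$: the same inequality has to yield both $(p-1)(k+1)>c$ (so that $\gamma_{p-1}([A,G])$ vanishes) and the bound $p-2$ on the class of every relevant $\langle a,[a,g]\rangle$ with $a\in\gamma_j(G)$, $j\geq k$ (so that Lemma \ref{xy} leaves $L$ regular with $\gamma_p(L)=1$). Once these two numerical facts are settled, the normality of $[\gamma_{j+1}(G),G^{\varphi}]$ in $\chi(G)$ and the regular $p$-group exponent criterion \cite[Corollary 2.11]{G} knit the per-generator estimate into the downward induction.
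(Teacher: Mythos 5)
Your proof is correct. The first half (the class bound via Lemma \ref{basic.chi}(c) and the vanishing of $\gamma_{p-1}([A,G])\le\gamma_{(p-1)(k+1)}(G)$) and the overall skeleton of the exponent bound (regularity from class $\le p-1$, then Hall's collection formula together with Lemma \ref{xy} to kill $\gamma_p(L)$ and reduce each generator's $p^e$-th power into $\gamma_2(L)^{p^e}$) coincide with the paper's argument. Where you diverge is in disposing of $\gamma_2(L)^{p^e}$: the paper observes that $[x,g^\varphi,x]=[x,g,x^\varphi]\in[A,A^\varphi]$, that $[A,A^\varphi]$ is normalized by $L$ via Lemma \ref{basic.chi}(c), hence $\gamma_2(L)\le[A,A^\varphi]$, and then quotes Theorem \ref{nilG} applied to $A$ (a group of class at most $p-1$) to conclude $[A,A^\varphi]^{p^e}=1$. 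You instead place $\gamma_2(L)$ inside $[\gamma_{j+1}(G),G^\varphi]$ and run a downward induction on $j$ from $c+1$ to $k$ --- in effect inlining, along the lower central series of $G$, the same descent that the paper's proof of Theorem \ref{nilG} performs along $\gamma_i(A)$. Both reductions are valid; yours is slightly longer but self-contained, and it sidesteps the implicit step of transferring Theorem \ref{nilG}, which is stated for $D(A)\le\chi(A)$, to the subgroup $\langle A,A^\varphi\rangle$ of $\chi(G)$. Your two numerical checks ($\lceil(c+1)/j\rceil-1\le p-2$ for all $j\ge k$, and $(p-1)(k+1)>c$) are exactly the ones required, and both follow from $k(p-1)\ge c+1$ as you indicate.
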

\begin{proof}
Let $\omega \in \gamma_{p-1}([A,G^{\varphi}])$. By Lemma \ref{basic.chi} (c) we have that $[h,g^{\varphi}]^{\omega}=[h,g^{\varphi}]$ since $\gamma_{p-1}([A,G])=1$. Thus, $\gamma_{p-1}([A,G^{\varphi}]) \leq Z(D(G))$, in particular $\gamma_{p}([A,G^{\varphi}])=1$.

Let $[x,g^{\varphi}]$ where $x\in A$ and $g\in G$. We will show that if $x^{p^e}=1$, then $[x,g^{\varphi}]^{p^e}=1$. By Theorem \ref{thm.Hall} we have that \[
[x,g^{\varphi}]^{p^e}\equiv [x^{p^e}, g^{\varphi}] \pmod {\gamma_{2}(L)^{p^e}\gamma_{p}(L)^{p^{e-1}}\gamma_{p^2}(L)^{p^{e-2}}\ldots \gamma_{p^e}(L)},
\]
where $L=\langle x,[x,g^{\varphi}]\rangle$.  By Lemma \ref{xy} we have that $\gamma_p(L)=1$ since $\langle x,[x,g]\rangle$ is nilpotent of class at most $p-2$. Thus, $
[x,g^{\varphi}]^{p^e} \in  \gamma_{2}(L)^{p^e}$. On the other hand, $[x,g^{\varphi},x]=[x,g,x^{\varphi}]\in [A,A^{\varphi}]$ and by Lemma \ref{basic.chi} (c) $[A,A^{\varphi}]$ is normalized by $L$. Hence we conclude that $\gamma_2(L) \leq [A,A^{\varphi}] $ and then $\gamma_2(L)^{p^e}=1$ by Theorem \ref{nilG}.
\end{proof}

Let us now prove Theorem~\ref{thm:nilpotent}. 

\begin{proof}[Proof of Theorem~\ref{thm:nilpotent}]
Since $\chi(G)$ is an extension of $D(G)$ by $G \times G$, it suffices to consider $\exp(D(G))$. \\ 

\noindent i) Assume that $p \geq 3$. We argue by induction on $c$. If $c \leq p-1$, then the result follows from Theorem \ref{nilG}. Now, suppose by induction hypothesis that if $M$ is a $p$-group of class $r$ with $2 \leq  r< c$, then $\exp(D(K))$ divides $\exp(K)^{\lceil \log_{p-1}(r+1)\rceil}$. 

Let $A=\gamma_{\left\lceil\frac{c+1}{p-1}\right\rceil}(G)$. Set $\overline{G} = G/A$. By Lemma \ref{lem.Sidki}~(b) we have the following induced exact sequence
$$ 
[A,G^{\varphi}] \hookrightarrow D(G)\twoheadrightarrow D\left( \overline{G} \right).
$$
By Lemma \ref{lemP1}, $\exp([A,G^{\varphi}])$ divides $\exp(G)$. By induction, the last term of the exact sequence $D(\overline{G})$ has exponent dividing $\exp(G)^{m}$  where $m=\left\lceil \log_{p-1} \left\lceil\frac{c+1}{p-1}\right\rceil\right\rceil=\left\lceil \log_{p-1}(\frac{c+1}{p-1})\right\rceil=\left\lceil\log_{p-1}(c+1)\right\rceil-~1$. Combining these bounds, one obtains $\exp(D(G))$ divides $\exp(G)^{\lceil \log_p(c+1)\rceil}$. \\ 

\noindent ii) Assume that $p = 2$. Set $m= \lfloor \log_2(c)\rfloor$ and $d$ the derived length of $G$. According to Theorem \ref{thm:solvable}~ii) and the formula $d \leq \lfloor \log_2(c)\rfloor +1$ (cf. \cite[Theorem 5.1.12]{Rob}), we deduce that $\exp(D(G))$ divides $2^{d-1} \cdot \exp(G)^d$ and so, $ \exp(D(G))$ divides  $2^{m}\cdot \exp(G)^{m+1}$, which completes the proof. 
\end{proof}

As an immediate consequence of the above result we obtain Sambonet's theorem \cite[Theorem 1.1]{Sambonet17} concerning the exponent of the Schur multiplier of $p$-groups of class $c$. More precisely, we conclude the following extension.  

\begin{cor}
Let $G$ be a finite $p$-group of class $c$. Then 
\begin{itemize}
    \item[i)] $\exp(M(G))$ and $\exp(G \wedge G)$ divide  $\exp(G)^{\lceil \log_{p-1}(c+1)\rceil}$, $p\geq 3$.
    \item[ii)] $\exp(M(G))$ and $\exp(G \wedge G)$ divide $2^{\lfloor \log_2(c)\rfloor} \cdot \exp(G)^{\lfloor \log_2(c)\rfloor+1}$, $p=2$.
\end{itemize}
\end{cor}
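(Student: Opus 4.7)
The plan is to derive the corollary as a direct consequence of Theorem~\ref{thm:nilpotent}, using only the structural relations between $D(G)$, the non-abelian exterior square $G\wedge G$, and the Schur multiplier $M(G)$ that have already been recorded in the introduction and in Remark~\ref{rem.R(G)}.

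First, I would observe that $\exp(G\wedge G)$ divides $\exp(D(G))$. This is immediate from the exact sequence
\[
1 \to R(G) \to D(G) \to G\wedge G \to 1
\]
recalled in Remark~\ref{rem.R(G)}, which exhibits $G\wedge G$ as a homomorphic image of $D(G)$.

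Next, I would show $\exp(M(G))$ divides $\exp(D(G))$ as well. From the introduction we have $W(G)=L(G)\cap D(G)\leq D(G)$, together with the identifications $W(G)_{\tau(G)}\cong M(G)$ and $R(G)\leq W(G)$; equivalently, $M(G)\cong W(G)/R(G)$. Thus $M(G)$ is a section of $D(G)$ (a quotient of the subgroup $W(G)$), and so its exponent divides $\exp(D(G))$. Alternatively, one may quote the exact sequence $1\to R(G)\to W(G)\to M(G)\to 1$ displayed in the introduction.

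Finally, I would apply Theorem~\ref{thm:nilpotent}. For $p\geq 3$ this gives $\exp(D(G))\mid \exp(G)^{\lceil \log_{p-1}(c+1)\rceil}$, and for $p=2$ it gives $\exp(D(G))\mid 2^{\lfloor\log_2(c)\rfloor}\cdot \exp(G)^{\lfloor\log_2(c)\rfloor+1}$. Combining with the divisibilities from the two previous paragraphs yields exactly the stated bounds for both $\exp(G\wedge G)$ and $\exp(M(G))$ in cases (i) and (ii). There is no real obstacle here: the corollary is purely an extraction from Theorem~\ref{thm:nilpotent} via the already-established fact that $G\wedge G$ and $M(G)$ are sections of $D(G)$.
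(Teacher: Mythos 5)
Your proposal is correct and follows exactly the route the paper intends: the paper presents this corollary as an immediate consequence of Theorem~\ref{thm:nilpotent}, relying on the facts recorded in the introduction and Remark~\ref{rem.R(G)} that $G\wedge G\cong D(G)/R(G)$ and $M(G)\cong W(G)/R(G)$ with $W(G)\leq D(G)$, so both are sections of $D(G)$. Nothing further is needed.
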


\section*{Acknowledgements}

This work was partially supported by DPI/UnB and FAPDF (Brazil).

\end{document}